\newtheorem{lm}{Lemma}
\newtheorem{prop}[lm]{Proposition}
\newtheorem{theorem}[lm]{Theorem}
\newtheorem{cy}[lm]{Corollary}
\newtheorem{df}{Definition}
\theoremstyle{definition}
\newtheorem{rk}[lm]{Remark}
\newcommand{\beq}{\begin{equation}}
\newcommand{\eeq}{\end{equation}}
\newcommand{\be}{\begin{enumerate}}
\newcommand{\ee}{\end{enumerate}}
\newcommand{\bp}{\begin{proof}}
\newcommand{\ep}{\end{proof}}
\newcommand{\bi}{\begin{itemize}}
\newcommand{\ei}{\end{itemize}}
\newcommand{\bea}{\begin{eqnarray*}}
\newcommand{\eea}{\end{eqnarray*}}
\newcommand{\bml}{\begin{multline*}}
\newcommand{\eml}{\end{multline*}}
\newcommand{\prn}[1]{\left( #1 \right)}
\newcommand{\bkt}[1]{\left[ #1 \right]}
\newcommand{\set}[1]{\left\{ #1 \right\}}
\newcommand{\abs}[1]{\left| #1 \right|}
\newcommand{\norm}[1]{\left|\left| #1 \right|\right|}
\newcommand{\gen}[1]{\langle #1 \rangle}
\newcommand{\Z}{\mathbb{Z}}
\newcommand{\C}{\mathbb{C}}
\newcommand{\Q}{\mathbb{Q}}
\edef\storedcatcodeat{\the\catcode`\@} \catcode`\@=11
\begin{document}

\title{Quantifying separability in RAAGs via representations}

\author{Olga Kharlampovich and Alina Vdovina}

\maketitle
\begin{abstract} We answer the question in \cite{L} and  prove the following statement. Let $L$ be a  right-angled Artin group (abbreviated as RAAG), $H$  a word quasiconvex subgroup of $L$, then there is a finite dimensional representation of $L$ that separates the subgroup $H$ in the induced Zariski topology. As a
corollary, we establish a polynomial upper bound on the size of the quotients used to
separate $H$ in $L$. This implies the same statement for a virtually special group $L$ and, in particular,  a fundamental groups of a hyperbolic 3-manifold.
   
\end{abstract}
\section{Introduction}

A subgroup $H<G$ is {\em separabl}e if  for any $g\in G-H$ there exist a homomorphism $\phi:G\rightarrow K$, where $K$ is finite and $\phi (g)\not\in \phi (H).$ 
Alternatively, $H=\cap _{H\leq L\leq G, [G:L]<\infty } L$.
Residual finiteness means that the trivial subgroup $1<G$ is separable. It was shown in \cite[Theorem F]{Hag} that every word quasiconvex subgroup of a finitely generated right-angled Artin
group (RAAG) is a virtual retract, and hence is separable.  If $B$ is a virtually special  compact cube complex such that $\pi _1(B)$ is
word-hyperbolic,  then every quasiconvex subgroup
of $\pi _1(B)$ is separable \cite{HW}. For both these cases we will quantify separability.
 Namely, we answer the question in \cite{L} and  prove the following statement. Let $L$ be a RAAG,   if $H$ is a cubically convex-cocompact subgroup of $L$, then there is a finite dimensional representation of $L$ that separates the subgroup $H$ in the induced Zariski topology. As a
corollary, we establish  polynomial upper bound on the size of the quotients used to
separate $H$ in $L$. This implies the same statement for a virtually special group $L$ and, in particular,  a fundamental groups of hyperbolic 3-manifold.  


\begin{theorem}\label{th1} Let $H\leq L$ be one of the following pairs of groups:

(1) $L$ a RAAG, $H$ a word quasiconvex subgroup;

(2) $L$ a virtually special group, $H$ a word quasiconvex  subgroup;
 
 (3)  $L$ a hyperbolic virtually special group,  $H$ a quasiconvex subgroup.
 
 Then there is a  faithful representation $\rho _H:L\rightarrow GL(V)$ such that $\overline {\rho _H(H)} \cap\rho_H(L)=\rho _H(H)$, where $\overline {\rho _H(H)} $ is the Zariski closure of  $\rho _H(H).$ 
\end{theorem}

\begin{cy} \label{co1} Let $L$  and $H$ be as in the theorems above.  Then there exists a constant $N>0$ such that for each $g\in L- H,$ there exist a finite group $Q$ and a homomorphism $\varphi:L\longrightarrow Q$ such that $\varphi\prn{g}\notin\varphi\prn{H}$ and $\abs{Q}\leq\norm{g}_S^N.$ If $K=H\ker\varphi,$ then $K$ is a finite-index subgroup of $L$ whose index is at most $\abs{Q}\leq\norm{g}_S^N$ with $H\leq K$ and $g\notin K.$ Moreover, the index of the normal core of the subgroup $K$ is bounded above by $\abs{Q}$.
\end{cy}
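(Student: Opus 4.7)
The plan is to convert the Zariski-topological separation furnished by Theorems~\ref{th1}--\ref{th4} into an effective separation by a finite quotient of polynomial size, via reduction of an integral representation modulo a small prime. The argument proceeds in four moves: (i) pass to $\rho = \rho_H$ viewed as an integral representation; (ii) use closedness of $\rho(H)$ in $\rho(L)$ to produce an integer-valued witness polynomial $p$ vanishing on $\rho(H)$ but nonvanishing at $\rho(g)$; (iii) bound $|p(\rho(g))|$ in terms of $\norm{g}_S$ and invoke prime counting to locate a small prime $q$ avoiding $p(\rho(g))$; (iv) take $\varphi = \rho \bmod q$ and $K = H\ker\varphi$.

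For (i), the representations constructed in Theorems~\ref{th1}--\ref{th4} can be arranged to take values in $GL_n(\mathbb{Z})$ (or at worst in $GL_n(\mathcal{O})$ for a ring of algebraic integers of bounded degree, which re-embeds into $GL_{nk}(\mathbb{Z})$ by restriction of scalars). Fixing a finite generating set $S$ of $L$ and letting $C$ bound the entries of $\rho(s^{\pm 1})$ for $s \in S$, submultiplicativity yields $|\rho(g)_{ij}| \leq (nC)^{\norm{g}_S}$. For (ii), since $\rho$ is faithful and $\overline{\rho(H)} \cap \rho(L) = \rho(H)$, $g \in L \setminus H$ implies $\rho(g) \notin \overline{\rho(H)}$; from a fixed finite generating set of the vanishing ideal of $\overline{\rho(H)}$ in $\mathbb{Z}[x_{ij}]$ we select a single polynomial $p$ of degree $d$, with $d$ depending only on $H$, satisfying $p(\rho(g)) \neq 0$.

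For (iii), the entry bound gives $0 \neq |p(\rho(g))| \leq C_1^{\norm{g}_S}$ for a constant $C_1$ depending only on $p$, $n$, and $C$. Hence $p(\rho(g))$ has at most $(\log_2 C_1)\,\norm{g}_S$ distinct prime divisors, and the prime number theorem produces a prime $q = O(\norm{g}_S \log \norm{g}_S)$ not dividing $p(\rho(g))$. For (iv), composing $\rho$ with reduction modulo $q$ gives $\varphi : L \to GL_n(\mathbb{F}_q)$ with finite image $Q$ of order at most $|GL_n(\mathbb{F}_q)| \leq q^{n^2} \leq \norm{g}_S^N$ for a suitable constant $N$ (any $N > n^2$ works once $\norm{g}_S$ is large, and small-word cases absorb into the constant). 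If $\varphi(g) = \varphi(h)$ for some $h \in H$, then $p(\varphi(g)) = p(\varphi(h)) = 0$ in $\mathbb{F}_q$ would force $q \mid p(\rho(g))$, a contradiction; thus $\varphi(g) \notin \varphi(H)$. Finally, $K = H\ker\varphi$ contains $H$, excludes $g$, and has index at most $|Q|$; its normal core contains the normal subgroup $\ker\varphi$, which has index exactly $|Q|$ in $L$, so the normal-core index is also at most $|Q|$.

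The main obstacle will be step~(i) combined with the uniform degree bound in step~(ii): one must verify that the construction of $\rho_H$ in the theorems can indeed be carried out over $\mathbb{Z}$ (or a ring of algebraic integers of bounded degree), and that the defining polynomials for $\overline{\rho_H(H)}$ can be chosen of degree $d$ independent of $g$. Both points hinge on a careful reading of the representations produced in Theorems~\ref{th1}--\ref{th4}; once the dimension $n$ and the degree $d$ are under uniform control, the prime-counting step and the exponent $N$ follow routinely.
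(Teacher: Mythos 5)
Your overall architecture --- produce a witness polynomial that vanishes on $\overline{\rho_H(H)}$ but not at $\rho_H(g)$, bound its value at $\rho_H(g)$ in terms of $\norm{g}_S$ by submultiplicativity of matrix entries, and then kill that value in a small finite quotient --- is exactly the architecture of the paper's proof, which runs the argument through Lemma \ref{lm11} (Lemma 5.1 of \cite{L}) and Lemma 2.1 of \cite{KM}. Steps (ii)--(iv) are sound, up to the minor imprecision that $\ker\varphi$ has index $|\varphi(L)|\leq |Q|$ rather than exactly $|Q|$, which does not affect the conclusion about the normal core.

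The genuine gap is step (i), which you correctly flagged as the obstacle but did not close. The representation $\rho_H$ of Theorem \ref{th1} is built by taking a faithful representation of the finite-index subgroup $K$ that \emph{strongly distinguishes} $H$ (two representations agreeing on $H$ and disagreeing at every $g\in K-H$, obtained by perturbing the images of the stable letters $t_i$), and then inducing up to $L$. Nothing in that construction places the matrix entries in $\Z$, or even in $\overline{\Q}$: the framework of \cite{L}, which the paper adopts, only requires the algebraic groups $\mathbf{G}=\overline{\rho_H(L)}$ and $\mathbf{A}=\overline{\rho_H(H)}$ to be $\overline{\Q}$-defined, while the entries of the generator images may generate a field $K(T_1,\ldots,T_d)$ of positive transcendence degree over $\Q$. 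Your restriction-of-scalars device handles rings of integers of number fields but not transcendental parameters, so the integer $p(\rho(g))$ of step (iii) need not exist and the prime-counting argument has nothing to apply to. The paper circumvents this by working in the finitely generated domain $R_L$ (a localization of $\mathcal{O}_{K_0}[T_1,\ldots,T_d]$) and invoking Lemma 2.1 of \cite{KM}, which produces a finite quotient ring $R$ in which the nonzero witness value $Q_g$ survives and whose order is polynomially bounded in the degree and coefficient size of $Q_g$; the finite group $Q$ is then a subgroup of $GL(n,R)$ rather than of $GL(n,\mathbb{F}_q)$. To repair your argument you must either prove that the transcendental parameters can be specialized to integral values while preserving faithfulness and the property $\overline{\rho_H(H)}\cap\rho_H(L)=\rho_H(H)$ --- which is not automatic --- or replace step (iii) by the finite-quotient lemma for finitely generated domains from \cite{KM}, which is what the paper does.
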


The groups covered by this corollary include fundamental groups of hyperbolic
3-manifolds, $C'(1/6)$ small cancellation groups and, therefore, random groups for density less than $1/12$.

Theorem \ref{th1} and Corollary \ref{co1} generalize results for free groups, surface groups from \cite{L} and for limit groups \cite{BK}.  We use \cite{L} to deduce Corollary \ref{co1} from the theorems.
 The constant $N$ in Corollary \ref{co1} explicitly depends on the subgroup $H$ and the dimension of $V$ in
Theorem \ref{th1}. It is known that there exists a finite index subgroup separating an element $g$ from $H$ of order linear in $||g||$ \cite{HP}, but the  upper bound for the index
of the normal core is factorial in the index of the subgroup. It is for this reason that
we include the statement about the normal core of $K$ at the end of the corollary.

Recently, several effective separability results have been established; see \cite{BR1}-\cite{HP},  \cite{L}-\cite{K3},  \cite{P1}-\cite{R}, \cite{S}.
Most relevant here are  papers \cite{L}, \cite{HP}.  
The methods used in \cite{HP} give linear bounds in terms of the word length of $|g|$ on the
index of the subgroup used in the separation but do not  produce polynomial
bounds for the normal core of that finite index subgroup.

\section{Preliminaries}
We now recall some terminology. We mostly follow \cite{Hag}, \cite{Wise}, \cite{HP}.  A subset $S$ of a geodesic metric space
$X$ is $K$-{\em quasiconvex} if for every geodesic $\gamma$ in $X$ whose endpoints lie in $S,$
the $K$-neighborhood of $S$ contains $\gamma$. 

We say that $S$ is {\em convex} if it is
$0$-quasiconvex.  A subcomplex $Y$ of a CAT(0) cube complex $X$ is convex provided $Y$ is connected
and for each vertex $v$ of $Y$ the link of $Y$ at $v$ is a full subcomplex of the link of $X$ at $v.$ The {\em combinatorial
convex hull} of a subcomplex $Y \subset X$ is the intersection of all convex subcomplexes
containing $Y$ .

A group $H$ acting on a geodesic metric space $X$ is {\em quasiconvex} if the
orbit $Hx$ is a $K$-quasiconvex subspace of $X$ for some $K > 0$ and some
$x\in X.$ If $H$ preserves a convex closed subset $C$ and acts cocompactly on $C$ we
say that $H$ is {\em convex cocompact}. 

A right-angled Artin group (often abbreviated as RAAG) is a group which has a presentation whose only relations are commutators between generators. One can consider generators as vertices of a graph $\Gamma$ that has an edge between two vertices if and only if the corresponding generators commute. Then the group is denoted by $A(\Gamma )$.  This group is the fundamental group of the associated  Salvetti complex $S(\Gamma )$ . The universal cover  of $S(\Gamma )$ is a CAT(0) cube complex on which $A(\Gamma )$ acts properly and cocompactly by isometries.  $A(\Gamma )$ is also the fundamental group of the 2-skeleton of $S(\Gamma )$. 

We will use the previous notions in the following context:  $X$ is the set of
vertices of a cube complex equipped with the combinatorial distance. Here
a geodesic is the sequence of vertices of a combinatorial geodesic of the
1-skeleton.  
We say that $H$ is {\em combinatorially quasiconvex} or {\em word quasiconvex}.  If $X$ is the set of vertices of a Salvetti complex  $S(\Gamma)$, then geodesics correspond to geodesic words in the standard generators of the RAAG.

 Convex cocompact subgroups of RAAGs are virtual retracts. 
If a  subgroup  $H$ of a RAAG is word quasi-convex  then it is convex cocompact  (with the closed subset being the  convex hull of a quasiconvex orbit)\ \cite[Theorem H]{Hag}. 
For hyperbolic groups the notion of a quasi-convex subgroup do not depend on a generating set (see, for example, \cite{Short}).

A {\em local isometry} $\phi :Y\rightarrow X$ of cube complexes is a locally
injective combinatorial map with the property that, if $e_1,\ldots ,e_n$ are edges  of $Y$ all incident to a $0$-cube (vertex)
$y$, and the (necessarily distinct) edges  $\phi (e_1),\ldots ,\phi (e_n)$ all lie in a common $n$-cube $c$ (containing $\phi(y)$),
then $e_1,\ldots ,e_n$ span an $n$-cube $c_0$ in $Y$ with $\phi (c_0)=c$. If $\phi :Y\rightarrow X$ is a local isometry and $X$ is
nonpositively-curved, then $Y$ is as well and the map is $\pi _1$-injective \cite[Lemma 2.12]{Hag}.  Moreover, $\phi$ lifts to an embedding $\tilde\phi :\tilde Y\rightarrow \tilde X$
of universal
covers, and $\tilde Y$ is convex in $\tilde X$ \cite[Lemma 3.2]{Wise}.


We will need the following fact used in the proof of Corollary B in  \cite{HP}.
 \begin{lm}  \label{leH} Let $L=A(\Gamma )$ be a RAAG, $S(\Gamma )$ its Salvetti complex, $H$ is a word quasiconvex subgroup of $L$.   Then $H=\pi _1(Y)$,  where $Y$ is a compact connected cube complex, based at a vertex
$w$, with a based local isometry $f: Y\rightarrow S(\Gamma )$.  \end{lm}
\begin{proof} Since $H$ is word quasiconvex,  there exists a quasiconvex $H$-orbit.  Therefore,  as in \cite[Theorem H]{Hag}, the convex hull $\tilde Y$ in the universal cover $\tilde S(\Gamma)$ of this $H$-orbit has $H$ acting on $\tilde Y$ cocompactly, and one takes $Y=H\backslash {\tilde Y}$, with $f$ induced by the inclusion $\tilde Y\rightarrow \tilde S(\Gamma )$.
\end{proof}

A nice thing about special cube complexes is
the fact that they behave in several important ways like graphs. One of the
features of special cube complexes is the ability to extend compact local isometries to covers,
generalizing the fact that finite immersions of graphs extend to covering maps. This
procedure, introduced in \cite{HW} and \cite[Definition 3.2] {HW12}  and outlined for the case that we need in  \cite[Theorem 2.6]{BR2}  is called {\em canonical completion}. We need this construction to analyse fundamental groups of $Y$ and $C(Y)$ so we describe the procedure here.

 Let $\Gamma$ be a finite graph, $S(\Gamma )$ Salvetti complex, $Y$ a compact cube complex that has a local isometric embedding in $S(\Gamma )$ with respect to combinatorial metric (so $\pi _1(Y)\leq A(\Gamma )).$  Since  $Y$  has a local isometric embedding in $S(\Gamma )$, each edge in $Y$ is naturally equipped with a direction and
labelling is induced by a generator of $\pi _1(S(\Gamma ))$. 

 For each label $x$ we consider maximal connected paths with all the edges labelled by $x$. If such a path $p$ is a cycle, we leave it alone.
If the initial vertex $v$ of the path $p$ has valency 1 with respect to $x$, then the final vertex $u$ has valency 1 as well because of $Y$ being locally isometrically
embedded in $S(\Gamma )$. In this case we add an edge labelled by $x$ with the initial vertex $u$ and the final vertex $v$. If a vertex $w$ does not have edges with label
$x$ adjacent to it, we add a loop with the label $x$ to the vertex $w$. Let $\hat Y$ be a union of $Y$ and all these new edges.   The crucial result is 
\begin{lm} \label{2.7} \cite[Lemma 2.7]{BR2} For each 2-cube $c$ of $S(\Gamma )$  the boundary path of $c$ lifts to a closed path in $\hat Y$.
\end{lm}

Hence we can add  cubes to $\hat Y$ everywhere where there is a boundary of a cube.  See the example in Fig 1 (we basically borrowed it from N. Lazarovich's talk in Montreal). Therefore $Y$ can be embedded in $C(Y)$ that is a finite cover of $S(\Gamma )$.  We now state the main properties of the canonical completion.

\begin{prop} \cite[Theorem 2.6]{BR2}\label{leHa} For each standard generator $x$ of  $\pi _1(S(\Gamma ))$, let $x$ denote the corresponding (oriented) edge of $S(\Gamma )$. Then for each vertex $w$ of $C(Y)$, we have that $w\in Y\subset C(Y)$, and there is an embedded cycle $\sigma$ in $C(Y)$ that starts and ends at $w$ such that $\sigma$ maps surjectively to $x$ under the covering map, and $\sigma$ contains at most one edge $\hat x$ labeled by $x$ not belonging to $Y$.

Moreover, the one-skeleton of $C(Y)$ consists on the one-skeleton of $Y$, together with the various edges $\hat x$ described above.
\end{prop}

\begin{figure}[ht!]
\centering
\includegraphics[width=1\textwidth]{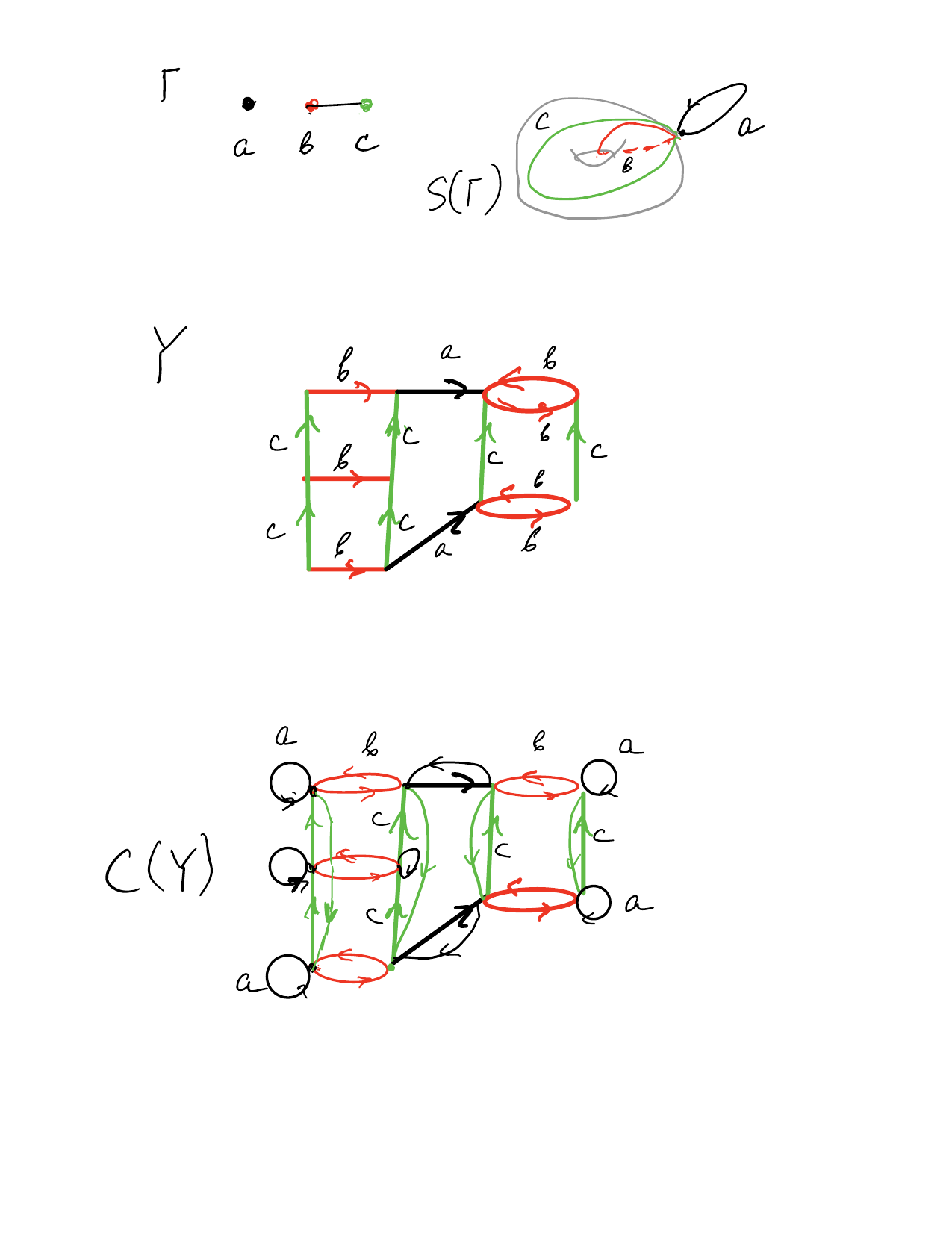}
\caption{There is a local isometry $f: Y\rightarrow S(\Gamma )$ and $C(Y)$ is the canonical completion of $Y$. Only 1-skeletons are shown.}
\label{ext1}
\end{figure}

\newpage
\section{Fundamental group of the canonical completion}

In this section we will understand the structure of the fundamental group  of the canonical completion.
\begin{prop}\label{ext}  Let $L$ be a RAAG, $H$ a word quasiconvex subgroup. Then   there exists a finite index subgroup $K$ obtained from $H$  by adding some conjugates of powers of standard generators of $L$. 

Algebraically, $K$ is obtained from $H$ by a series of HNN-extensions such that associated subgroups are the same and with identical isomorphism (free products with infinite cyclic group occur as a special case, when the associated subgroup is trivial). 

\end{prop}
\begin{proof} Let $L=A(\Gamma )$ be a RAAG, $S(\Gamma )$ its Salvetti complex, then by Lemma \ref{leH},   $H=\pi _1(Y)$,  where $Y$ is a compact connected cube complex, based at a vertex
$w$, with a based local isometry $f: Y\rightarrow S(\Gamma )$.
We often use the following observation. 
\begin{rk} \label{rk1}  Let $x,y$ be standard generators of $L$ or their inverses such that $[x,y]=1$ in $L$. 

1. Suppose there are edges $(v_1,v_2)$ labelled by $x$ and $(v_1, w_1)$ labelled by $y$ in $Y$.  Since $f: Y\rightarrow S(\Gamma )$ is a local isometry, these two edges must be on the boundary of a square. Therefore  there are edges $(w_1,w_2)$ with label $x$ and $(v_2,w_2)$ with label $y$ in $Y$.

2.  By induction we obtain the following. Suppose there is a path  $(v_1,\ldots v_k)$ with edges labelled by $x$ ($x$-path) and  an edge $(v_1, w_1)$  labelled by $y$ in $Y$.  Then there is an $x$-path  
$(w_1,\ldots ,w_k)$ , edges $(v_i,w_i)$ labelled by $y$ ($i=2,\ldots ,k$) in $Y$ and for each $i=1,\ldots ,k-1$, vertices $v_i,v_{i+1},w_i,w_{i+1}$ are corners of a square.

\end{rk}

The first statement of the proposition follows from Lemma \ref{leHa}.

Now we will prove the second statement  by induction on the number of steps needed to complete the one-skeleton of $Y$.  We will only work with 2-skeletons since they determine fundamental groups. For a label $x$ we  say that a vertex $v$ is {\em complete in $x$} if there is a loop labelled by $x^k$ (for some $k$)  starting at $v$. 

We take an incomplete in $x$ vertex $v$ of $Y$.  A step is the following transformation.

 Suppose that  $v$ has valency 1 in $x$ and  $v$ has an outgoing edge labelled by $x$. Let $v_k$
be the endpoint of the maximal $x$-path $(v=v_1,\ldots ,v_k)$  from $v$. Then we connect $v_k$ with $v$ by an edge labelled by $x$.  Applying the second statement of Remark \ref{rk1} inductively, we see that each vertex $w$   that is connected to $v$ by a path with label $q$ that commutes with $x$, also has an $x$-path $(w=w_1,\ldots ,w_k)$  with each $w_i$ connected to $v_i$ by a path with label $q$. Moreover, this path must be a maximal $x$-path  that starts at $w_1$ (otherwise we could apply to it  the remark and get a contradiction with maximality of $(v_1,\ldots ,v_k)$).  We connect each such $w_k$ with $w_1$ by an edge labelled by $x$. 

If $v$ has valency 0 in $x$, we   add a loop labelled by $x$ in $v$ and to all vertices connected to $v$ by  paths with labels that commute with $x$. 

Suppose, finally, that $v$ has valency 2 in $x$. We complete the path labelled by a power of $x$  passing through $v$. We also complete the corresponding $x$-labelled path  in all vertices connected to $v$ by a path with label that commutes with $x$. It follows from Remark \ref{rk1} that we can do this.

By Lemma \ref{2.7}, if in the obtained extension of $Y$ (denoted $Y_1$) we have a lift of the boundary of a square, then this lift is a closed path. To complete the step, we fill in newly obtained lifts of boundaries of squares by the squares. Denote the obtained complex by $Y_2$.  Both $Y, Y_2$ have a local isometry into $S(\Gamma )$. 

 In all the  cases we added an extra generator that is a conjugate to $x^k$, where $k$ was the length of the completed path. We can assume that  the base point is at  $v$,  and denote $t=x^k$.
 
  Let $\Gamma _0$ be the subgraph of $\Gamma$ obtained by removing the vertex $x$ and all the edges incident to it.  Then $L$ can be considered as an HNN-extension of $A(\Gamma _0)$ with the  stable letter $x$ and edge group $Q$, where $Q$ is the subgroup of $L$ generated by all the standard generators (except $x$) that commute with $x$ (with the identity isomorphism $Q\rightarrow Q$).  Reduced  forms of elements in HNN-extensions are defined, for example, in  \cite[Section 4.2]{LS}.   Let $h_0,\ldots ,h_n\in A(\Gamma _0)$. An element $$h_0x^{k_1}h_1x^{k_2}\ldots x^{k_n}h_n\in L$$  is in {\em reduced form}   if either $n=0$ or $n\geq 1$,  $k_1,\ldots ,k_n\neq 0$,  $h_2,\ldots ,h_{n-1} \not\in Q$ (by other words, $h_1,\ldots ,h_{n-1}$ do not commute with $x$). 
 By Britton's Lemma \cite[Section 4.2]{LS}, for $n\neq 0$ an element in reduced form is nontrivial in $L$.
 \begin{rk}\label{rk2}  1. If a relation  $y_1y_2=y_2y_1,$ where $y_1,y_2$ are standard generators of $L$, is applied to a label $g\in\pi _1(Y,v)$ of a loop at $v$ in the 1-skeleton of $Y$, then there is another loop at $v$ in the 1-skeleton of $Y$ labelled by the result.  
 
2. Since $g$ can be brought to a reduced form by  a sequence of applications of commutativity relations and cancellations, there is a loop at $v$ in the 1-skeleton of $Y$ labelled by this reduced form.
 \end{rk}
 Indeed, let $g\in\pi _1(Y,v)$ labels a loop $\sigma$.   If $\sigma $  is a composition of paths  $\bar p_1\bar  p_2\bar p_3$, and $\bar p_2$ is labelled by $y_1y_2$, then  by  Remark \ref{rk1}, the path $\bar p_2$ can be replaced by a path $\bar p_4$ labelled by $y_2y_1$ that is  also in the 1-skeleton of $Y$, as well as the loop $\bar p_1\bar p_4\bar p_3$.

Now we claim that $\pi _1(Y_2,v)$ is an HNN-extension of $\pi _1(Y,v)$ with stable letter $t$ and the edge group being the intersection  $\pi _1(Y, v)\cap Q$.  Obviously, in $\pi _1(Y_2,v)$  all the elements from $\pi _1(Y,v)\cap Q$ commute with $t$. We have to show that there are no other relations involving $t=x^k$.

Suppose there is a cyclically reduced relation in $\pi _1(Y_2)$ \begin{equation} \label{rel}x^{n_0k}g_1x^{n_1 k}g_2\ldots g_m=1,\end{equation} where $g_1,\ldots ,g_m\in\pi _1(Y,v)$ all do not commute with $x$ in  $L$, $n_0,\ldots ,n_{m-1}\neq 0$.   
Since a reduced form of the trivial element in $L$ does not contain $x$, there should be possible to make  a reduction in $L$ such that all powers of $x$ in (\ref{rel}) including those that occur in $g_1,\ldots ,g_m$,  cancel with each other. If $x^{n_jk}$  completely cancels with powers of $x$ in  $g_j$ and $g_{j+1}$ for some $j$,  then  they can be represented in reduced form as labels of loops at $v$ in the 1-skeleton of $Y$ as
$$g_j=q_0x^{m_1}\ldots x^{m_t}q_t, \  g_{j+1}=p_0x^{s_1}\ldots x^{s_r}p_r,$$
where $q_0,\ldots ,q_t,p_0,\ldots , p_r$ do not contain $x$, $q_t,p_0$ commute with $x$, $m_t+s_1=-n_jk$ and $q_{t-1},p_1$ do not commute with $x$ (because $g_j,g_{j+1}$ do not commute with $x$).   We can rewrite  $g_j,g_{j+1}$ in reduced form as $$g_j=q_0x^{m_1}\ldots q_tx^{m_t}, \  g_{j+1}=x^{s_1}p_0\ldots x^{s_r}p_r.$$  By Remark \ref{rk2} they also must correspond to the labels of loops in the 1-skeleton of $Y$ starting at the vertex $v$. In the 1-skeleton of $Y$ there is no loop labelled by a power of $x$ at $v$ (recall, that we added one edge labelled by $x$  and  obtained the loop labelled by $x^k$ in the 1-skeleton of  $Y_2$).  Therefore in $Y$ we cannot have both: a path labelled by $x^{m_t}$ ending at $v$ and a path labelled by $x^{s_1}$ beginning at $v$ such that $m_t+s_1=-n_jk$ (because $|m_t+s_1|<k$).  Therefore $x^{n_jk}$ cannot cancel with powers of $x$ in $g_j$ and $g_{j+1}$.  Therefore either $g_j$ or $g_{j+1}$  must commute with $x$ in $L$, and there are no relations in the form (\ref{rel}) in $\pi_1(Y_2,v).$ 
 This implies that all relations in $\pi _1(Y_2,v)$ involving $x^k$ follow from relations  $[x^k, \bar g]=1$, where $\bar g\in \pi _1(Y,v)$. 

Suppose $[\bar g,x^k]=1$, $\bar g\in\pi _1(Y,v)$, then $[\bar g,x]=1$ in $L$.
Then $\bar g=x^{k_1}g$, where $g\in L,$ $[g,x]=1$ and $g$ does not contain $x$ when written in the standard generators of $L$. If $k_1=0$, then $g\in \pi _1(Y,v)\cap Q$ and we are done. Suppose $k_1\neq 0$.  Then, since $x^k\in \pi _1(Y_2,v)$,   $g^{k}\in \pi _1(Y_2,v)$. Therefore $g^{k}\in \pi _1(Y,v)$ 
But then $x^{k_1k}\in\pi _1(Y,v),$ contradiction with the assumption that $v$ was $x$-incomplete. Therefore $k_1=0$ and $g\in \pi _1(Y,v)\cap Q$. Relation $[x^k,g]=1$ is an HNN-extension relation.

Therefore, 
 all relations of  $\pi _1(Y_2,v)$ containing $x^k$ are consequences of relations $[x^k,g]=1$ for   elements $g\in\pi _1(Y,v)\cap Q$. This proves the claim. Since the canonical completion is constructed by a sequence of such steps, the proposition is proved.

\end{proof}

\section{Representations}
Given an algebraically closed field $K$, a finite dimensional $K$-vector space $V$, a finitely
generated group $G$, and a homomorphism $\rho : G\rightarrow GL(V)$, we have the subspace
topology on $\rho (G)$ induced by the Zariski topology on $GL(V)$. The
pullback of this topology to $G$ under $\rho$ is called the Zariski topology associated to $\rho$.
 \begin{df}\cite{L}
   Let $G$ be a finitely generated group and $H$ a finitely generated  subgroup of $G.$ For a complex affine algebraic group $\bf{G}$ and any representation $\rho_0\in Hom(G,\bf{G}),$ we have the closed affine subvariety 
    $$R_{\rho_0,H}(G,{\bf{G}})=\set{\rho\in Hom(G,{\bf G}): \rho_0(h)=\rho(h)\text{ for all } h\in H}$$
    The representation $\rho_0$ is said to \textit{strongly distinguish} $H$ in $G$ if there exist representations $\rho,\rho'\in R_{\rho_0,H}(G,\bf{G})$ such that $\rho(g)\neq\rho'(g)$ for all $g\in G-H.$
\end{df}

\begin{lm}\label{3.1}\cite[Lemma 3.1]{L} Let $G$ be a finitely generated group, ${\bf G}$ a complex algebraic group, and $H$ a finitely generated subgroup of $G.$ If $H$ is strongly distinguished  by a representation $\rho\in Hom(G, {\bf G})$, then there exists a representation $\varrho: G\longrightarrow {\bf G}\times {\bf G}$ such that $\varrho(G)\cap\overline{\varrho(H)}=\varrho(H),$ where $\overline{\varrho(H)}$ is the Zariski closure of $\varrho(H)$ in ${\bf G}\times {\bf G}.$
\end{lm}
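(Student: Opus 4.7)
The plan is to take the direct product of the two distinguishing representations and exploit the fact that the diagonal of ${\bf G} \times {\bf G}$ is Zariski closed. Concretely, let $\rho, \rho' \in R_{\rho_0, H}(G, {\bf G})$ be representations witnessing that $\rho_0$ strongly distinguishes $H$, so $\rho|_H = \rho'|_H = \rho_0|_H$ while $\rho(g) \neq \rho'(g)$ for every $g \in G - H$. I would define
\[
\varrho : G \longrightarrow {\bf G} \times {\bf G}, \qquad \varrho(g) = (\rho(g), \rho'(g)).
\]

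The first observation is that $\varrho(H)$ lands in the diagonal $\Delta = \{(x,x) : x \in {\bf G}\}$, since $\rho$ and $\rho'$ agree on $H$. Because ${\bf G}$ is an affine algebraic group, $\Delta$ is Zariski closed in ${\bf G} \times {\bf G}$ (it is cut out by the equations $x = y$, or equivalently is the preimage of the identity under the regular map $(x,y) \mapsto xy^{-1}$). Therefore $\overline{\varrho(H)} \subseteq \Delta$.

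Now I would verify the two inclusions. The inclusion $\varrho(H) \subseteq \varrho(G) \cap \overline{\varrho(H)}$ is immediate. For the reverse, suppose $g \in G$ satisfies $\varrho(g) \in \overline{\varrho(H)}$. Then $\varrho(g) \in \Delta$, i.e.\ $\rho(g) = \rho'(g)$. By the strong distinguishing hypothesis, this equality forces $g \in H$, so $\varrho(g) \in \varrho(H)$, giving $\varrho(G) \cap \overline{\varrho(H)} \subseteq \varrho(H)$.

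There is essentially no hard step here; the whole argument hinges on the closedness of the diagonal in ${\bf G} \times {\bf G}$, which is automatic for an affine algebraic group and needs only to be stated once. The work of the lemma has really been pushed into the definition of strong distinguishability: producing the pair $(\rho, \rho')$ is what requires effort in applications, while packaging them as $(\rho, \rho')$ is the short formal move recorded here.
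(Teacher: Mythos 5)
Your argument is correct and is exactly the standard proof of this lemma from \cite{L}, which the paper cites without reproving: form $\varrho=(\rho,\rho')$, use that the diagonal is Zariski closed in ${\bf G}\times{\bf G}$ so $\overline{\varrho(H)}\subseteq\Delta$, and conclude from the strong distinguishing property. Nothing is missing.
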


\noindent

\begin{prop}  Let $L$ be a RAAG and $H$ a word quasiconvex  finitely generated subgroup. There exist a finite index subgroup  $K\leq L$, a complex affine algebraic group $\bf G$ and  a faithful representation $\rho :K\rightarrow \bf G$ that  strongly distinguishes $H$ in $K$.
 \end{prop}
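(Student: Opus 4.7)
The plan is to apply Proposition \ref{ext} to reduce the construction to an inductive extension argument along a tower of HNN-extensions and free products, and then to exhibit the strongly distinguishing pair by a genericity argument in the resulting space of extensions.

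First I would apply Proposition \ref{ext} to choose a finite-index $K \leq L$ containing $H$ together with a chain $H = K_0 \leq K_1 \leq \cdots \leq K_n = K$ in which each $K_{i+1}$ is obtained from $K_i$ by either (i) a free product with an infinite cyclic group $\langle t_{i+1}\rangle$, or (ii) an HNN-extension with associated subgroup $A_{i+1} \leq K_i$ glued to itself by the identity, so that the stable letter $t_{i+1}$ commutes with $A_{i+1}$. Since RAAGs embed in $GL_N(\mathbb{Z})$, the group $K$ is linear, and I would fix a faithful representation $\rho_\omega : K \to \mathbf{G} := GL_N(\mathbb{C})$ and set $\rho_H := \rho_\omega|_H$.

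Next I would parametrize the space of representations of $K$ extending $\rho_H$. At the step from $K_i$ to $K_{i+1}$, by the universal property of the corresponding free product or HNN-extension, an extension is determined by the image $X_{i+1}$ of $t_{i+1}$: arbitrary in $\mathbf{G}$ in case (i), and required to lie in the centralizer $C_{i+1} := Z_{\mathbf{G}}(\rho(A_{i+1}))$ in case (ii). Iterating, one obtains a family $\{\rho_{\vec X}\}_{\vec X \in \mathcal{X}}$ of representations $K \to \mathbf{G}$ agreeing with $\rho_H$ on $H$, parametrized by a complex algebraic variety $\mathcal{X}$ of positive dimension, irreducible after passing to a component containing the point corresponding to $\rho_\omega$.

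To extract a strongly distinguishing pair I would consider, for each $g \in K - H$, the Zariski-closed subset $Z_g := \{(\vec X, \vec X') \in \mathcal{X} \times \mathcal{X} : \rho_{\vec X}(g) = \rho_{\vec X'}(g)\}$. Britton's lemma applied to the graph-of-groups decomposition of $K$ from Proposition \ref{ext} forces every $g \in K - H$ to involve some stable letter $t_j$ essentially; the matrix $\rho_{\vec X}(g)$ is then a non-constant regular function of $X_j$, so $Z_g$ is a proper subvariety of $\mathcal{X} \times \mathcal{X}$. Because $K$ is countable and $\mathcal{X} \times \mathcal{X}$ is irreducible over the uncountable field $\mathbb{C}$, the union $\bigcup_{g \in K - H} Z_g$ does not exhaust $\mathcal{X} \times \mathcal{X}$, and any $(\vec X, \vec X')$ in the complement provides the desired pair $\rho, \rho'$ witnessing that $\rho_\omega$ strongly distinguishes $H$ in $K$.

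The hard part will be verifying that $Z_g$ is always a \emph{proper} subvariety, especially at an HNN step of type (ii) where $X_j$ is confined to a centralizer that could a priori be small (e.g.\ only scalars). I expect to handle this by enlarging $\mathbf{G}$, replacing $\rho_\omega$ by its direct sum with an auxiliary representation arranged so that each $\rho(A_{i+1})$ sits inside a proper block; this guarantees that $C_{i+1}$ contains a positive-dimensional torus on which an algebraic-independence argument for the entries of $X_j$ rules out accidental coincidences along the Bass--Serre normal form of $g$. Feeding the resulting pair $\rho,\rho'$ into Lemma \ref{3.1} will then deliver the Zariski-closure statement needed for Theorem \ref{th1}.
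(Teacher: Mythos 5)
Your architecture is genuinely different from the paper's, and it contains a real gap at exactly the point you flag. The reduction is fine up to a point: Proposition \ref{ext} does give the tower $H=K_0<\cdots<K_n=K$, the space $\mathcal{X}$ of extensions of $\rho_H$ is an algebraic variety (though note it is an iterated fibration rather than a product, since the centralizer constraining $X_{i+1}$ depends on the earlier coordinates), and an irreducible complex variety is not a countable union of proper closed subsets, so the whole proof does reduce to showing each $Z_g$ is proper. But that properness \emph{is} the proposition: for each $g\in K-H$ you must produce two admissible parameter choices on which $\rho(g)$ differs, and this can genuinely fail for a fixed faithful $\rho$. For instance, if $g=t_jht_j^{-1}$ with $h\in K_{j-1}$ and $Z_{\mathbf{G}}(\rho(A_j))\subseteq Z_{\mathbf{G}}(\rho(h))$, then $\rho_{\vec X}(g)=\rho(h)$ for every admissible $X_j$ and $Z_g=\mathcal{X}\times\mathcal{X}$. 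Your proposed repair --- direct-summing with an auxiliary representation so that $Z_{\mathbf{G}}(\rho(A_j))$ contains a positive-dimensional torus --- does not obviously help: the block-scalar elements $(\lambda I,\mu I)$ you gain in that centralizer commute with \emph{every} block-diagonal matrix, hence with $\rho(h)$ as well, so they produce no deformation of $\rho(g)$. Making the "algebraic-independence argument along the Bass--Serre normal form" precise is the entire difficulty, and it is not supplied.

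The paper avoids this issue by an explicit choice rather than a genericity argument: it keeps $\rho'=\rho$ on $H$ and sets $\rho'(t_i)=\rho(t_i)^k$ for some $k>1$. This deformation is always admissible --- $\rho(t_i)^k$ automatically centralizes whatever $\rho(t_i)$ centralizes, so no enlargement of $\mathbf{G}$ and no positive-dimensional centralizer is needed --- and checking $\rho(g)\neq\rho'(g)$ for $g\in K-H$ reduces to faithfulness of $\rho$ together with commutation relations in $K$ coming from the HNN normal form, rather than to non-constancy of a regular function on a parameter space. I would advise replacing your generic pair $(\vec X,\vec X')$ by this explicit one inside your own framework: it discharges the properness of $Z_g$ (indeed it exhibits a point outside it) and makes the countability argument unnecessary.
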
 
 \begin{proof} 
 
Let $K$ be a finite index subgroup from Proposition \ref{ext}.
RAAGs are linear, so $K$ is faithfully represented in $GL(k, {\mathbb C})$ for some $k$ (for many classes of RAAGs, $k\leq 4$).  Let $\rho_0$ be a faithful representation of $K$ in $GL(k, {\mathbb C})$. 

Since $K$ is obtained from $H$ as in Proposition \ref{ext}, we can write
 $$H=K_0<\ldots K_i<\ldots K_n=K,$$
 where $K_i=\langle K_{i-1},t_i|[H_{i-1},t_i]=1\rangle,$  where $H_{i}<K_{i}$.

Define representations $\rho _j:K\rightarrow GL(k, {\mathbb C})$ for $j=1,\ldots ,n$ as follows:

\vspace{0.1cm}
$\rho _j(g)=\rho _0(g)$ for $g\in K_{j-1}$, $\rho _j(t_j)=\rho_0(t_j)^2$ and $\rho _j(t_i)=1$ for $i>j.$

Notice that $\rho _j$ is faithful on $K_j$ because the subgroup generated by $K_{j-1}$ and $t_j^2$ in $K_j$ is isomorphic to $K_j$.  
For $g\in K_j-K_{j-1}$  consider an element $g_1$ obtained from the normal form of $g$ in the HNN-extension $K_j$ by replacing each $t_j$ by $t_j^2$. It follows from the normal form of elements in $K_j$  that  $g\neq g_1$.   Since $\rho _0$ is faithful, we have  $\rho _0(g)\neq \rho _0(g_1)=\rho _j(g)$.

Now define $\rho, \rho ': K\rightarrow GL(k, {\mathbb C})^n$ as $\rho (g)=\rho _0(g)\times\ldots \times\rho _0(g)$ and  $\rho'(g)=\rho _1(g)\times\ldots\times\rho _n(g).$ Take ${\bf G}=GL(nk, {\mathbb C})$ since  $GL(k, {\mathbb C})^n\leq GL(nk, {\mathbb C})$.

 Then $H$ is strongly distinguished in K
 by the representation $\rho$, because  $\rho$ and $\rho '$ are the same on $H$ and $\rho ' (g)\neq\rho (g),$ for any $g\in K-H$. 

 \end{proof}

\noindent Let us prove Theorem \ref{th1}.   The proof of \cite[Theorem 1.1]{L}
shows that it is sufficient to have a representation of $K$ that strongly distinguishes $H$. Indeed, like in \cite[Corollary 3.4]{L}, we can construct a representation
$\Phi : K\rightarrow GL(k,{\mathbb C})\times GL(k,{\mathbb C})$ for some $k$ such that $\Phi (g)\in Diag(GL(k,{\mathbb C}))$ if and only if $g\in H$. Setting $d_H=[L:K]$, we have the induced representation $${Ind_K}^G(\Phi ):L\rightarrow 
GL(kd_H,{\mathbb C})\times GL(kd_H,{\mathbb C}).$$  
Recall, that when $\Phi$ is represented by the action on the vector space $V$ and $L=\cup _{i=0} ^t g_iK$, then the induced representation 
acts on the disjoint union $\sqcup_{i=0} ^t g_iV$ as follows
$$g\Sigma g_iv_i=\Sigma g_{j(i)} \Phi (k_i)v_i,$$ where
$gg_i=g_{j(i)}k_i, $ for $k _i\in K.$ Taking $\rho ={Ind_K}^G(\Phi )$, it follows from the construction of $\rho $ and definition of induction that $\rho (g)\in\overline {( \rho (H))}$ if and only if $g\in H$. If we set $\rho=\rho_H,$  then  Theorem \ref{th1}(1) is proved. \\

Since a special group is a subgroup of a RAAG and we can extend a representation from a finite index subgroup to the whole group, (1)  implies  (2).
All definitions of quasiconvexity coincide in hyperbolic groups and (3) follows from (2).

\section{ Proof of Corollary \ref{co1}}

Given a complex algebraic group $\textbf{G} < GL(n,\C)$, there exist polynomials  $P_1,\ldots,P_r\in\C\bkt{X_{i,j}}$ such that
 $$\textbf{G}=\textbf{G}\prn{\C}= V\prn{P_1,\ldots,P_r}=\set{X\in\C^{n^2}\mid P_k(X)=0, k=1,\ldots, r}$$
We refer to the polynomials $P_1,\ldots,P_r$ as \textit{defining polynomials}  for $\textbf{G}$. We will say that \textbf{G} is $K$-defined for a subfield $K\subset\C$ if there exists defining polynomials $P_1,\ldots,P_r\in K\bkt{X_{i,j}}$ for $\textbf{G}$.
For a complex affine algebraic subgroup $\textbf{H} < \textbf{G} < GL(n,\C)$, we will pick the defining polynomials for $\textbf{H}$ to contain a defining set for $\textbf{G}$ as a subset. Specifically, we have polynomials
$P_1,...,P_{r_{\textbf{G}}},P_{r_{\textbf{G}}+1},...,P_{r_{\textbf{H}}}$  such that 
 \begin{equation}\label{eq:1}
     \textbf{G}= V\prn{P_1,\ldots,P_{r_{\textbf{G}}}} \text{ and } \textbf{H}= V\prn{P_1,\ldots,P_{r_{\textbf{H}}}}
 \end{equation}

\noindent If $\textbf{G}$ is defined over a number field $K$ with associated ring of integers $\mathcal{O}_K$, we can find polynomials $P_1,\ldots,P_r\in\mathcal{O}_K\bkt{X_{i,j}}$  as a defining set by clearing denominators. For instance, in the case when $K = \Q$ and $\mathcal{O}_K= \Z$, these are multivariable integer polynomials.\vspace{2mm}

\noindent For a fixed finite set $X=\set{x_1,\ldots, x_t}$ with associated free group $F(X)$ and any group $G,$ the set of homomorphisms from $F(X)$ to $G,$ denoted by $Hom\prn{F\prn{X},G},$ can be identified with $G^t=G_1\times\ldots\times G_t.$ For any point $\prn{g_1,\ldots,g_t}\in G^t,$ we have an associated homomorphism $\varphi_{\prn{g_1,\ldots,g_t}}:F\prn{X}\longrightarrow G$ given by $\varphi_{\prn{g_1,\ldots,g_t}}\prn{x_i}=g_i.$ For any word $w\in F(X),$ we have a function Eval$_w: Hom(F(X), G)\longrightarrow G $ defined by  Eval$_w(\varphi_{\prn{g_1,\ldots,g_t}})(w)=w(g_1,\ldots,g_t).$ For a finitely presented group $\Gamma,$ we fix a finite presentation $\gen{\gamma_1,\ldots, \gamma_{t}\mid r_1,\ldots, r_{t'} },$ where $X=\set{\gamma_1,\ldots, \gamma_{t}}$ generates $\Gamma$ as a monoid and $\set{r_1,\ldots, r_{t'}}$ is a finite set of relations. If $\textbf{G}$ is a complex affine algebraic subgroup of $Gl_n(n,\C),$ the set $Hom(\Gamma,\textbf{G})$ of homomorphisms $\rho: \Gamma\longrightarrow \textbf{G}$ can be identified with an affine subvariety of $\bf G^t.$ Specifically,
\begin{equation}
    Hom(\Gamma,\textbf{G})=\set{\prn{g_1,\ldots, g_{t}}\in \textbf{G}^t\mid r_j\prn{g_1,\ldots, g_{t}}=I_n \text{ for all } j}
\end{equation}

\noindent If $\Gamma$ is finitely generated, $Hom(\Gamma,\textbf{G})$ is an affine algebraic variety by the Hilbert Basis Theorem.

\vspace{2mm}\noindent The set $Hom(\Gamma,\textbf{G})$ also has a topology induced by the analytic topology on $\bf G^t.$ There is a Zariski open
subset of $Hom(\Gamma,\textbf{G})$ that is smooth in the this topology called the smooth locus, and the functions
Eval$_w: Hom(\Gamma, \textbf{G})\longrightarrow \textbf{G}$ are analytic on the smooth locus. For any subset $S\in \Gamma$ and representation $\rho\in Hom(\Gamma, \textbf{G})$, $\overline{\rho(S)}$ will denote the Zariski closure of $\rho(S)$ in $\textbf{G}$.

 \begin{lm}\label{lm11} (\cite[Lemma 5.1]{L})
    Let ${\bf G}\leq GL\prn{n,\C}$ be a $\overline{\Q}$-algebraic group, $L\leq {\bf G}$ be a finitely generated subgroup, and $\textbf{A}\leq {\bf G}$ be a  $\overline{\Q}$-algebraic subgroup. Then, $H=L\cap\textbf{A}$ is closed in the profinite topology. 

 \end{lm}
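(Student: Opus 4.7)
The plan is to exhibit, for each $g \in L - H$, a finite-index subgroup of $L$ that contains $H$ but not $g$; the classical residual-finiteness mechanism for linear groups over number fields will supply these subgroups via congruence quotients. Since $\mathbf{G}$ and $\mathbf{A}$ are $\overline{\mathbb{Q}}$-algebraic and $L$ is finitely generated, I would first choose a number field $K$ containing both all matrix entries of a fixed finite generating set of $L$ and the coefficients of a compatible system of defining polynomials for $\mathbf{G}$ and $\mathbf{A}$ (taken in the nested form displayed earlier). After clearing denominators I may assume those polynomials have coefficients in the ring of integers $\mathcal{O}_K$, and after inverting a suitable nonzero $s \in \mathcal{O}_K$ I may arrange $L \subseteq GL(n, \mathcal{O}_K[1/s])$.

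For each prime ideal $\mathfrak{p} \subset \mathcal{O}_K$ coprime to $s$, reduction modulo $\mathfrak{p}$ gives a homomorphism $\pi_\mathfrak{p}: GL(n, \mathcal{O}_K[1/s]) \to GL(n, k_\mathfrak{p})$ into a finite group, where $k_\mathfrak{p} = \mathcal{O}_K/\mathfrak{p}$. Set $N_\mathfrak{p} = L \cap \ker(\pi_\mathfrak{p})$, a finite-index normal subgroup of $L$, and $M_\mathfrak{p} = H \cdot N_\mathfrak{p}$, a finite-index subgroup of $L$ that contains $H$. To separate a given $g \in L - H$ from $H$, observe that $g \notin \mathbf{A}$, so some defining polynomial $P$ of $\mathbf{A}$ satisfies $P(g) \neq 0$ in $K$. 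The fractional ideal $(P(g))$ in $\mathcal{O}_K[1/s]$ has only finitely many prime divisors, so for any $\mathfrak{p}$ coprime to $s$ and to $(P(g))$, the reduction $\pi_\mathfrak{p}(P(g))$ is nonzero in $k_\mathfrak{p}$. Because every defining equation of $\mathbf{A}$ is satisfied by every element of $H$, the reductions of these equations are satisfied by $\pi_\mathfrak{p}(H)$, but by construction not by $\pi_\mathfrak{p}(g)$. Hence $\pi_\mathfrak{p}(g) \notin \pi_\mathfrak{p}(H)$, which forces $g \notin H \cdot \ker(\pi_\mathfrak{p}|_L) = M_\mathfrak{p}$. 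Intersecting over all admissible $\mathfrak{p}$ yields $H = \bigcap_\mathfrak{p} M_\mathfrak{p}$, which is precisely what is needed for $H$ to be closed in the profinite topology of $L$.

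The main obstacle I anticipate is the algebraic bookkeeping: choosing the defining polynomials so that their mod-$\mathfrak{p}$ reductions still cut out a subvariety containing $\pi_\mathfrak{p}(H)$, and ensuring that the finitely many ``bad'' primes to be excluded (those dividing $s$, those dividing $P(g)$, and any primes at which the reduction of $\mathbf{A}$ degenerates) still leave infinitely many usable primes. Because each exclusion removes only a finite set of primes, a good $\mathfrak{p}$ is always available for each $g$, and the separation argument closes uniformly.
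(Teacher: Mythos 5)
Your overall strategy is the same as the paper's: find a defining polynomial $P$ of $\mathbf{A}$ (from a nested defining system for $\mathbf{A}\leq\mathbf{G}$) with $P(g)\neq 0$, reduce the coefficient ring modulo a finite quotient in which $P(g)$ survives, and observe that $H$ still satisfies the reduced equations while $g$ does not, so $g\notin H\cdot\ker$. That part of the argument is sound. However, there is a genuine gap at the very first step: you ``choose a number field $K$ containing all matrix entries of a fixed finite generating set of $L$.'' The hypothesis is only that $\mathbf{G}$ and $\mathbf{A}$ are $\overline{\Q}$-defined; $L$ is an arbitrary finitely generated subgroup of $\mathbf{G}(\C)$, so its matrix entries may well be transcendental, and no number field contains them. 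Consequently you cannot arrange $L\subseteq GL(n,\mathcal{O}_K[1/s])$, and the whole apparatus of prime ideals of $\mathcal{O}_K$ and residue fields $k_{\mathfrak{p}}$ is unavailable in general.

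The paper's proof repairs exactly this point. It lets $K_L$ be the field generated over a number field $K_0$ (containing the coefficients of the $P_j$) by the matrix entries of the generators; since $L$ is finitely generated, $K_L$ has finite transcendence degree and is isomorphic to $K(T_1,\dots,T_d)$ for a number field $K$ and a finite transcendence basis. One then works in the finitely generated ring $R_L$, obtained from $\mathcal{O}_{K_0}[T_1,\dots,T_d]$ by inverting finitely many integers and polynomials, and invokes Lemma 2.1 of Bou-Rabee--McReynolds to produce a \emph{finite} quotient ring $R$ of $R_L$ in which the nonzero element $Q_g=P_{j_g}\bigl((M_g)_{1,1},\dots,(M_g)_{n,n}\bigr)$ remains nonzero; the induced map $GL(n,R_L)\to GL(n,R)$ then plays the role of your $\pi_{\mathfrak{p}}$. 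Your argument is essentially the special case $d=0$ of this (entries algebraic), where finite quotients of $\mathcal{O}_K[1/s]$ really are given by residue fields at good primes. To close the gap you would need either to add the transcendence-basis reduction (Malcev's argument) as above, or to justify separately that in your intended application $L$ can be conjugated into $GL(n,\overline{\Q})$ --- neither of which is automatic from the stated hypotheses.
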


\begin{proof} We will reproduce the proof because the bound in Corollary \ref{co1} depends on it.
    Given $g\in L-H$, we need a homomorphism $\varphi:L\longrightarrow Q$ such that $|Q|<\infty$ and $\varphi\prn{g}\notin\varphi\prn{H}.$
We first select polynomials $P_1,...,P_{r_{\textbf{G}}},...,P_{r_{\textbf{A}}}\in\C\bkt{X_{i,j}} $ satisfying (\ref{eq:1}). Since $\textbf{G}$ and $\textbf{A}$ are $\overline{\Q}$-defined, we can select $P_j\in\mathcal{O}_{K_0}\bkt{X_{i,j}}$ for some number field $K_0/\Q.$ We fix a finite set $\set{l_1,\ldots, l_{r_L}}$ that generates $L$  as a monoid. In order to distinguish between elements of $L$ as an abstract group and the explicit elements in ${\bf G},$ we set $l=M_l\in {\bf G}$ for each $l\in L.$  In particular, we have a representation given by $\rho_0:L\longrightarrow{\bf G}$ given by $\rho_0(l_{t})=M_{l_{t}}$. We set $K_L$ to be the field generated over $K_0$ by the set of matrix entries $\set{\prn{M_t}_{i,j}}_{t,i,j}$. It is straightforward to see that $K_L$  is independent of the choice of the generating set for $L.$  Since $L$ is finitely generated, the field $K_L$ has finite transcendence degree over $\Q$ and so $K_L$ is isomorphic to a field of the form $K(T)$  where $K/\Q$ is a number field and  $T=\set{T_1,\ldots, T_d}$ is a transcendental basis (see \cite{L}). For each, $M_{l_t},$ we have $(M_{l_t})_{i,j}=F_{i,j,t}(T)\in K_L$. In particular, we can view the $(i,j)$-entry of the matrix $M_{l_t}$ as a rational function in $d$ variables with coefficients in some number field $K$. Taking the ring $R_L$ 
generated over $\mathcal{O}_{K_0}$ by the set $\set{\prn{M_{l_t}}_{i,j}}_{t,i,j}$, $R_L$ is obtained from $\mathcal{O}_{K_0}\bkt{T_1,\ldots, T_d}$ by inverting
a finite number of integers and polynomials. Any ring homomorphism  $R_L\longrightarrow R$ induces a group homomorphism $GL(n,R_L)\longrightarrow GL(n,R)$, and since $L\leq GL(n,R_L)$, we obtain $L\longrightarrow GL(n,R)$ . If $g\in L- H$ then there exists $r_{\mathbf{G}}<j_g\leq r_{\mathbf{A}} $ such that $Q_g=P_{j_{g}}\prn{\prn{M_l}_{1,1},\ldots,\prn{M_l}_{n,n}}\neq 0$. Using Lemma 2.1 in \cite{KM}, we have a ring homomorphism $\psi_R:R_L\longrightarrow R$ with $|R|<\infty$ such that $\psi_R(Q_g)\neq 0$. Setting, $\rho_{R}:GL(n, R_L)\longrightarrow GL(n,R)$ we assert that $\rho_{R}(g)\notin\rho_{R}(H)$. To see this, set $\overline{   M}_{\eta}=\rho_R(\eta)$ for each $\eta\in L$, and note that $\psi_R(P_j((M_{\eta})_{1,1},\ldots,M_{\eta})_{n,n}) )=P_j((\overline{M}_{\eta})_{1,1},\ldots,(\overline{M}_{\eta})_{n,n} )$ . For each $h\in H$, we know that $P_{j_{l}}\prn{(M_h)_{i,j}}=0$ and so $P_j((\overline{M}_{\eta})_{1,1},\ldots,(\overline{M}_{\eta})_{n,n})= 0$ . However, by selection of $\psi_R$, we know that $\psi_R(Q_g)\neq 0$ and so $\rho_{R}(g)\notin\rho_{R}(H)$ . 
\end{proof}

 \noindent Theorem \ref{th1}  and Lemma \ref{lm11} imply Corollary \ref{co1}. 

\begin{proof} 
    Since $H\leq L$ is word quasiconvex, by Theorem \ref{th1} there is a faithful representation $$\rho_H:L\longrightarrow GL\prn{n,\C}$$ such that $\overline {\rho _H(H)} \cap\rho_H(L)=\rho _H(H)$. We can construct the representation in Theorem \ref{th1} so that $\textbf{G}=\overline{\rho_H(L)}$ and $\textbf{A}=\overline {\rho _H(H)}$ are both $\overline{\Q}$-defined. So, by Lemma \ref{lm11}, we can separate $H$ in $L.$ Next, we quantify the separability of $H$ in $L.$ Toward that end, we need to bound the order of the ring $R$ in the proof of Lemma \ref{lm11} in terms of the word length of the element $g.$ Lemma 2.1 from \cite{KM} bounds the size of $R$ in terms of the coefficient size and degree of the polynomial $Q_g.$ It follows from a discussion on pp 412-413 of \cite{KM} that the coefficients and degree can be bounded in terms of the word length of $g,$ and  the coefficients and degrees of the polynomials $P_j.$ Because the $P_j$ are independent of the word $g,$ there exists a constant $N_0$ such that $|R|\leq \norm{g}^{N_0}.$ By construction, the group $Q$ we seek is a subgroup of $GL(n,R).$ Thus, $|Q|\leq |R|^{n^2}\leq\norm{g}^{N_0n^2}.$ Taking $N=N_0n^2$ completes the proof.

\end{proof}
Acknowledgement:  We are thankful to N. Lazarovich and  M. Hagen for the useful discussions about canonical completions during the Workshop on Cube Complexes and Combinatorial Geometry (Montreal 2023) that inspired this work. We thank Dolciani foundation, Simons foundation and PSC CUNY for the support. We thank the referee for helping to improve the exposition.

Olga Kharlampovich, CUNY, Graduate Center and Hunter College;  

Alina Vdovina, CUNY, Graduate Center and City College.

\begin{thebibliography}{10}






\bibitem{BR1} K. Bou-Rabee, Quantifying residual finiteness. J. Algebra 323, 729-737 (2010)
\bibitem{BR2} K. Bou-Rabee, M.F Hagen, P. Patel, Residual finiteness growths of virtually special groups. Math.
Z. 279, 297-310 (2015)

\bibitem{KM} K. Bou-Rabee, D.B. McReynolds, Extremal behavior of divisibility functions on linear groups.Geom.
Dedicata 175, 407-415 (2015).
\bibitem{BK} K. Brown, O. Kharlampovich, Quantifying separability in limit groups via
representations, Sel. Math. New Ser. 31, 8 (2025). https://doi.org/10.1007/s00029-024-01008-3.
 

\bibitem{HP}  M.F. Hagen,  P. Patel,  Quantifying separability in virtually special groups. Pacific J. Math. 284, 103-120 (2016)
\bibitem{Hag} F. Haglund, Finite index subgroups of graph products, Geom Dedicata (2008) 135:167-209
\bibitem{HW} F. Haglund, D. Wise, Special cube complexes, GAFA, Geom. funct. anal.
Vol. 17 (2008) 1551-620.
\bibitem{HW12} F. Haglund, D. Wise. A combination theorem for special cube complexes. Ann. of
Math. (2), 176(3):1427-1482, 2012.
\bibitem{L} L. Louder, D. B. McReynolds,
P.Patel, Zariski closures and subgroup separability, Sel. Math. New Ser. (2017) 23, 2019-2027. 
\bibitem{K1} M. Kassabov, F. Matucci, Bounding the residual finiteness of free groups. Proc. Am.Math. Soc. 139,
2281-2286 (2011)
\bibitem{K2} G. Kozma,  A. Thom,  Divisibility and laws in finite simple groups. Math. Ann. 364, 79-95 (2016)
\bibitem{K3}  O. Kharlampovich,  A.  Myasnikov, M.  Sapir,  Algorithmically complex residually finite groups, Bulletin of Mathematical Sciences 7 (2017), no.
2, 309-352.
\bibitem{LS} R. Lyndon, P. Schupp, Combinatorial group theory,  Springer,  1977.
\bibitem{P1} P. Patel, On a Theorem of Peter Scott. Proc. Am. Math. Soc. 142, 2891-2906 (2014)
\bibitem{P2} P. Patel,  On the residual finiteness growths of particular hyperbolic manifold groups. Geom. Dedicata
186(1), 87-103 (2016)
\bibitem{P3}  M. Pengitore, Effective conjugacy separability of finitely generated nilpotent groups. http://arxiv.org/
abs/1502.05445 (2015)
\bibitem{R} I. Rivin,  Geodesics with one self-intersection, and other stories. Adv. Math. 231, 2391-2412 (2012)
\bibitem{Short} H. Short, Quasiconvexity and a theorem of Howson's, Group theory from a geometrical viewpoint (Trieste, 1990) (E. Ghys, A. Hae
iger, and A. Verjovsky, eds.),
World Sci. Publishing, River Edge, NJ, 1991, 168-176.
\bibitem{S} B. Solie, 
Quantitative residual properties of $\Gamma$- imit groups. (English summary)
Internat. J. Algebra Comput. 24 (2014), no. 2,207-231.
\bibitem{Wise} D. T. Wise. From riches to RAAGs: 3-manifolds, right-angled Artin groups, and cubical geometry.
In Lecture notes, NSF-CBMS Conference, CUNY Graduate Center, New York, 2012.
\end{thebibliography}
\end{document}